\documentclass[12pt, a4paper]{amsart}
\usepackage{url,graphicx}
\usepackage{multirow, xcolor}
\usepackage{fullpage}
\usepackage{amsfonts,amscd,amssymb,amsmath,amsthm,mathrsfs,multirow,lscape,xfrac}
\usepackage{pbox,lipsum, stmaryrd,url, epstopdf,float, enumerate}
\usepackage{array}
\usepackage{xspace, comment}
\usepackage[all,cmtip]{xy}
\usepackage[english]{babel}
\usepackage{color}

\usepackage{ucs}
\usepackage[utf8]{inputenc}
\usepackage{dsfont}
\usepackage[T1]{fontenc} 
\usepackage{lmodern}
\usepackage[thinlines]{easytable}

\usepackage{phaistos}
\usepackage{microtype}

\usepackage{enumitem}
\usepackage{scalerel}
\usepackage{todonotes}
\usepackage{stackengine,wasysym}
\usepackage{todonotes}
\usepackage[T1]{fontenc}
\usepackage[colorlinks,linkcolor=blue,anchorcolor=blue,citecolor=blue,backref=page]{hyperref} 
\usepackage{cleveref}
\usepackage[english]{babel}

\newtheorem{theorem}{Theorem}[section]
\newtheorem{cor}[theorem]{Corollary}
\newtheorem{lemma}[theorem]{Lemma}

\def\tareesidedbox#1{%
\setbox0=\hbox{$\vphantom{\zeta^{(k)}}#1$}%
\dimen0=\wd0
\advance\dimen0 by3pt
\rlap{\hbox{\vrule height11pt width.4pt depth2pt
\kern-.4pt
\vrule height11.4pt width\dimen0 depth-11pt
\kern-.4pt
\vrule height11pt width.4pt depth2pt}}%
\hbox to\dimen0{\hss$\vphantom{\zeta^{(k)}}#1$\hss}}

\title[ Lower bound for  heights of H\'{e}non maps]{On lower bounds for  canonical heights of the map {$\phi(x,y)=(y,x+y^d+b)$}}
\author{Jorge Mello}

\begin{document}

\date{\today}

\begin{abstract}
    We give a lower bound for the canonical height associated to  Hénon maps $\phi(X,Y)=(Y,X+Y^D+B)$ of non-periodic points when $D>2$ in the spirit of conjectures of Lang and Silverman. This is followed by an application and extends previous work  for $D=2$ in  \cite{Ingram1}.
\end{abstract}

\maketitle

\tableofcontents

\section{Introduction} In algebraic dynamics of dimension $2$, the so-called Hénon maps, of normalized form
$$
\phi(x,y):=(ay,x+f(y))
$$ with $f(y)$ a polynomial of degree at least $2$, are automorphisms that play an important role in plane algebraic dynamics. The study of the arithmetic properties of such maps over number fields was initiated by Silverman \cite{Silverman}, who constructed canonical heights and proved that the set of corresponding periodic points  is finite. This was extended by several authors \cite{Ingram1,Kawaguchi2,Kawaguchi}. The canonical heights associated to polynomial automorphisms  of $\mathbb{A}^2$ were constructed by Kawaguchi ( see \cite{Kawaguchi}). For Hénon maps of degree $d$ over a number field or function field $K$ with set of places $M_K$, using the notation $\phi^N=\underbrace{\phi \circ \ldots  \circ \phi}_{n \text{ times}}$ and denoting the usual logarithmic Weil height in $\mathbb{A}^2(\overline{K})$ by $h$, those canonical heights are constructed as
$$
\hat{h}^+_\phi(P)=\lim_{N \rightarrow \infty}\dfrac{h(\phi^N(P))}{d^N}, \text{   } \text{   }\text{   }\text{  }\hat{h}^-_\phi(P)=\lim_{N \rightarrow \infty}\dfrac{h(\phi^{-N}(P))}{d^N},
$$and
$$
\hat{h}_\phi(P)=\hat{h}^+_\phi(P)+\hat{h}^-_\phi(P).
$$This height vanishes precisely at periodic points as expected from such so-called canonical height functions. This is also the case for classical canonical heights associated with morphisms or elliptic curves.

Naturally, one can look for non-trivial lower bounds on the smallest positive values of the canonical heights associated to maps of a given family, in the spirit of conjectures of Lang and Silverman (See \cite{Silverman2} and \cite{Looper}). In this context, for the particular family $\phi(x,y)=(y,x+y^2+b)$ over a number field or a function field, Ingram \cite{Ingram1} proved that if $P$ is not a periodic point, then 
$$
\hat{h}_\phi(P) \geq \epsilon \max \{h(b),1\}
$$ for a constant $\epsilon >0$ depending on the places dividing $b.$ In this note, we generalize this bound to similar Hénon maps of any degree $d\geq 2$,  motivated by the case $d=2$ and its proof. Namely, we prove the following
\begin{theorem}
Let $\phi(x,y)=(y,x+y^d+b)$ with $d\geq 2$ be a H\'{e}non map defined over a number field or function field $K$ of characteristic 0. Then there exists $B \in \mathbb{Z}^+$ such that if $P \in \mathbb{A}^2(K)$ is not periodic for $\phi$ of period at most $B$, then
$$
\hat{h}_\phi(P)\geq
\epsilon\max \{h(b),1\}
$$ where $\epsilon >0$ is a constant depending on $d$ and the places dividing $b.$
\end{theorem}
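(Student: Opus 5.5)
We will follow Ingram's strategy for $d=2$: decompose the canonical height into local canonical heights and bound the local contributions place by place. For each $v\in M_K$ we define local Green functions
$$
G^\pm_v(P)=\lim_{N\to\infty}\frac{\log^+\|\phi^{\pm N}(P)\|_v}{d^N},
$$
where $\log^+\|(x,y)\|_v=\log\max\{1,|x|_v,|y|_v\}$. Then $\hat h^\pm_\phi(P)=\frac{1}{[K:\mathbb{Q}]}\sum_v n_v G^\pm_v(P)$ (suitably normalized in the function field case), and each $G_v=G_v^++G_v^-$ is nonnegative. Since $\phi^{-1}(x,y)=(y-x^d-b,x)$ is conjugate to $\phi$ by $(x,y)\mapsto(y,x)$, it suffices to understand $G^+_v$; the bounds for $G^-_v$ follow by symmetry. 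It is therefore enough to show that a positive proportion of the weighted mass of $h(b)=\sum_v n_v\log^+|b|_v$ is captured by $\hat h_\phi$ along a bounded piece of the orbit.

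\textbf{Step 1 (non-archimedean places with $|b|_v>1$).} Here we use a filtration argument. Set $R_v=|b|_v^{1/d}$. Whenever $|y|_v>R_v$ and $|y|_v\ge|x|_v$, the ultrametric inequality gives $|x+y^d+b|_v=|y|_v^d$. The region $\{|y|_v>\max(|x|_v,R_v)\}$ is therefore forward invariant and escapes, so that $G^+_v(P)\ge\log|y|_v\ge\frac1d\log|b|_v$. Symmetrically, $\{|x|_v>\max(|y|_v,R_v)\}$ escapes backward. The only remaining region is $\max(|x|_v,|y|_v)\le R_v$. For $P$ in this region, $|x+y^d+b|_v$ equals $|b|_v$ unless $y^d\equiv -b$ to leading order, and a value of size $|b|_v>R_v$ escapes at the next step.

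The key claim is that for every $P$, at least one of $P,\phi(P),\dots,\phi^{k}(P)$ or their backward images (with $k$ depending only on $d$) lies in an escaping region. This yields $G_v(P)\ge c_d\log|b|_v$, the factor $d^{-k}$ being absorbed into $c_d$. Points that stay bounded at $v$ for all times would have to satisfy $y_n^d\approx -b$ repeatedly. We will rescale by $y=R_v u$ and reduce modulo $v$ to obtain the model map $(u_{n+1})^d\cdot(\text{stuff})$, whose bounded orbit lies in a finite set of roots of unity. This is the source of the constant depending on the places dividing $b$: the residue characteristic may divide $d$, and the rescaling may require a ramified extension. The number of such places is finite, and we fix the constant accordingly.

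\textbf{Step 2 (archimedean places and the case $h(b)$ small).} At archimedean $v$ we use the analogous filtration with $R_v=C_d\max(1,|b|_v)^{1/d}$, which gives $G_v\ge c\log^+|b|_v-C'$. Summing over places yields $\hat h_\phi(P)\ge \epsilon_1 h(b)-C''$. This settles the case $h(b)\ge 2C''/\epsilon_1$.

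When $h(b)$ is bounded, and in particular when $b$ lies in a set of bounded height but possibly large degree, we need $\hat h_\phi(P)\ge\epsilon$ for all non-periodic $P$. This is where the hypothesis on periods at most $B$ enters. We would argue that $\hat h_\phi(P)$ small forces $P$ to lie $v$-adically in the filled Julia set at every place, and that a point in the bounded region at every place has an orbit of bounded height. Northcott-type finiteness over the field $K(b)$ of bounded degree is not available uniformly, however.

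The main obstacle is this uniformity: obtaining $\hat h_\phi(P)\ge\epsilon$ with $\epsilon$ independent of $[K:\mathbb{Q}]$ when $h(b)\le C$. Our first attempt will follow Ingram and use the period bound. If $\hat h_\phi(P)$ is tiny, then by Step 1 applied to the orbit, all iterates up to time $B$ lie in the $v$-bounded region at all places dividing $b$. The nonarchimedean reduction then forces $\phi^n(P)\equiv P$ modulo these places for some $n\le B$. Combined with the local lower bound $G_v\ge c\log|\phi^n(P)-P|_v^{-1}$-type estimates (a product-formula argument on $x_n-x_0$), this yields a contradiction unless $\phi^n(P)=P$. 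In the function field case, $h(b)\ge1$ whenever $b$ is nonconstant, and a constant $b$ gives isotriviality, in which case non-periodic points have height at least $1$ over the constant field.
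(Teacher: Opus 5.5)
\textbf{There is a genuine gap at the core of Steps 1 and 2.} Your key claim is that at a place with $|b|_v>1$ every point enters an escaping region within $k=k(d)$ forward or backward iterates. That would give $G_v(P)\ge c_d\log|b|_v$ for \emph{every} $P$, and $G_v(P)\ge c\log^+|b|_v-C'$ at archimedean $v$.

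This is false. The points $(\gamma,\gamma)$ with $\gamma^d=-b$ are fixed by $\phi$, so $G_v=0$ there at every place. Taking $b=-p^{-d}$ and $P=(1/p,1/p)$ contradicts Step 1 at $v=p$. Taking $b=-N^d$ and $P=(N,N)$ with $N\in\mathbb{Z}$ large contradicts the inequality $\hat h_\phi(P)\ge\epsilon_1h(b)-C''$ derived in Step 2, since $\hat h_\phi(P)=0$ while $h(b)=d\log N$.

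Excluding periodic points does not rescue a pointwise bound. Points of $K^2$ that are $v$-adically very close to such a fixed point $(\gamma,\gamma)\in K^2$ are, with finitely many exceptions, non-periodic. They stay in the bounded region for as many iterates in both directions as you like, so $G_v(P)/\log|b|_v$ can be made arbitrarily small. Hence no local estimate of the form $G_v(P)\ge c\log^+|b|_v-C'$ holds. Non-periodicity therefore cannot be deferred to the case of bounded $h(b)$: it is exactly what produces an inequality $h(b)\le d^{M+2}\hat h_\phi(P)+C$ in the first place, whatever the size of $h(b)$.

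\textbf{The missing idea} is the global product-formula argument your last paragraph only gestures at. It must be quantitative and simultaneous over the bad places. The paper considers the window $P_j=\phi^j(P)$, $j\in[-M,M]$, and at each place makes a dichotomy:
\begin{itemize}
\item Either a positive proportion of the $P_j$ lie in $\mathcal{B}^+_v(\phi)$ (or in $\mathcal{B}^-_v(\phi)$). Then Lemma 2.1 bounds $\log\|P_j\|_v$ and $\log^+|b|_v$ by $d^{M+1}\hat\lambda_{v,\phi}(P)+O(1)$.
\item Or a positive proportion lie outside both. Then by Lemma 2.2 their coordinates are within $O(|d|_v^{-1}|b|_v^{(2-d)/d})$ of $d$-th roots of $-b$.
\end{itemize}
In the second case, pigeonhole on which roots occur, and use that $(y_i^d-y_j^d)/(y_i-y_j)$ has size roughly $|b|_v^{(d-1)/d}$. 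This yields many indices with $|x_i-x_j|_v$ and $|y_i-y_j|_v$ bounded by roughly $|b|_v^{(3-2d)/d}$, an exponent small enough to absorb $\log^+|b|_v$ (Lemma 2.3). Lemmas 2.4 and 2.5 treat a coordinate that is constant along the window.

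Nesting this pigeonhole over the bad places is what forces $2M\ge B_{s,s}$ and makes $\epsilon\approx d^{-(M+2)}$ depend on their number $s$. It has nothing to do with residue characteristic or ramification: the terms $\log|d|_v^{-1}$ sum to a constant depending only on $d$.

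The outcome is a pair $i\ne j$ with $\log|x_i-x_j|_v+\log|y_i-y_j|_v+\log^+|b|_v\le d^{M+2}\hat\lambda_{v,\phi}(P)+\alpha_v$ at \emph{every} place. Non-periodicity of period at most $2M$ gives $P_i\ne P_j$. The product formula applied to the nonzero differences then gives $h(b)\le d^{M+2}\hat h_\phi(P)+C$.

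Your ``$\phi^n(P)\equiv P$ modulo these places'' is only a qualitative shadow of this. Without the exponent $(3-2d)/d$ and the simultaneity across places, it yields nothing under the product formula.

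Finally, uniformity in $[K:\mathbb{Q}]$ is not the obstacle the statement asks you to overcome. The paper handles small $h(b)$ by appealing to the discreteness of $\hat h_\phi$ on $K^2$.
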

As a consequence of the theorem, we can can give more evidence on a conjecture by Ingram \cite[Conjecture 1.7]{Ingram1} on the finiteness of unlikely intersections of fibers of distinct orbits for the H\'{e}non maps over function fields in the families treated in this note. Denoting the orbit of $P$ under $\phi$ by
$$
\mathcal{O}_\phi(P)=\{\phi^N(P): N \in \mathbb{Z} \},
$$we obtain
\begin{cor}Let $F=K(C)$, for $C/K$ a curve and $K$ a number field, let $b \in F$ with pole divisor $\eta \in Div(C),$ 
let $\phi(x,y)=(y,x+y^d+b)$ with $d\geq 2$ and let $P,Q \in \mathbb{A}^2(F)$ have distinct orbits under $\phi.$ For any $s\geq 1,$ there exist only finitely many $t \in C(K)$, $s$-integral with respect to $\eta,$  such that $\mathcal{O}_{\phi_t}(P(t))=\mathcal{O}_{\phi_t}(Q(t)).$

\end{cor}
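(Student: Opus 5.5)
The plan is to deduce the corollary from the Theorem via specialization, following the strategy of Ingram's argument for $d=2$. First, since $P$ and $Q$ have distinct orbits under $\phi$ over $F$, at least one of them, say $P$, is not periodic. The case where both are periodic with distinct orbits I will handle separately: over $F$ the periodic points of $\phi$ come from a finite set, and if $\mathcal{O}_{\phi_t}(P(t))=\mathcal{O}_{\phi_t}(Q(t))$ then $Q(t)=\phi_t^{N}(P(t))$ for some $N$ bounded by the period. Each of the finitely many equations $Q=\phi^N(P)$ fails generically, so it can hold only on a proper closed subset of $C$, hence at finitely many $t$.

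In the main case, suppose $\mathcal{O}_{\phi_t}(P(t))=\mathcal{O}_{\phi_t}(Q(t))$, so $Q(t)=\phi_t^{N}(P(t))$ for some $N=N(t)\in\mathbb{Z}$. The goal is to bound $|N|$ uniformly in $t$; then finiteness follows as above, because each identity $Q=\phi^N(P)$ is false over $F$ and so holds at only finitely many $t$.

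To bound $N$, I use heights. By functoriality, $\hat h^{\pm}_{\phi_t}(\phi_t^N R)=d^{\pm N}\hat h^{\pm}_{\phi_t}(R)$. Combined with the standard comparison $|\hat h^\pm_{\phi_t}(R)-h^{\pm}\text{-type height}|\le c\max\{h(b_t),1\}$, this gives
$$
d^{|N|}\hat h^{\sigma}_{\phi_t}(P(t)) \le \hat h^\sigma_{\phi_t}(Q(t)) \le h(Q(t))+O(h(b_t)+1)
$$
for a suitable sign $\sigma$. By Silverman's specialization (Call–Silverman type) for heights on $C$, $h(Q(t))$ and $h(b_t)$ are each $O(h_C(t))$.

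The lower bound on the left comes from the Theorem applied over the number field $K(t)$. For $t$ which are $s$-integral with respect to $\eta$, the places dividing $b_t$ lie in a set controlled by $s$ and $\eta$, so the constant $\epsilon$ is uniform. The hypothesis ``not periodic of period at most $B$'' holds for all but finitely many $t$, since each condition $\phi^n(P)=P$ with $n\le B$ fails over $F$.

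I expect the main obstacle to be the splitting of $\hat h_{\phi_t}=\hat h^+ + \hat h^-$. The Theorem bounds only the sum, but each one-sided height scales differently under $\phi^N$. I would handle this by using $\hat h_{\phi_t}(Q(t))\ge d^{|N|}\min(\hat h^+,\hat h^-)$-style estimates. More precisely, since $\hat h(\phi^N P)=d^N\hat h^+(P)+d^{-N}\hat h^-(P)$, I use $\max$ of the two pieces $\ge \tfrac12\hat h(P)\ge\tfrac{\epsilon}{2}\max\{h(b_t),1\}$. If $\hat h^+(P(t))$ is the large piece and $N>0$, this yields $d^{N}\epsilon\max\{h(b_t),1\}\ll h_C(t)$. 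Swapping the roles of $P$ and $Q$ (that is, replacing $N$ by $-N$) handles the other signs. Finally, $h_C(t)\ll h(b_t)+1$, because $b$ is nonconstant with poles exactly at $\eta$; the constant-$b$ case is a number-field statement handled directly. Together these give $d^{|N|}\ll 1$, so $|N|$ is bounded uniformly in $t$.
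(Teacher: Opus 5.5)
Your reduction is the natural one. The paper itself gives no argument beyond deferring to Ingram's proof of his Theorem 1.8 with Theorem 1.1 substituted. It suffices to bound $|N|$ uniformly over $s$-integral $t$, since each identity $Q=\phi^N(P)$ is false in $F$. The ingredients you list are the right ones: the scaling $\hat h^{\pm}_{\phi_t}(\phi_t^N R)=d^{\pm N}\hat h^{\pm}_{\phi_t}(R)$, the bound $\hat h^{\pm}_{\phi_t}(R)\le h(R)+O(h(b_t)+1)$, the estimate $h(P_t),h(Q_t)\ll h_C(t)+1\ll h(b_t)+1$, and uniformity of $\epsilon$ and $B$ under $s$-integrality.

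The gap is in the last step. You apply Theorem 1.1 only at the endpoints $P_t,Q_t$, and the claim that swapping $P$ and $Q$ disposes of the remaining sign cases is false. Write $P_t=\phi_t^{n}(Q_t)$ with $n>0$, and suppose $\hat h^+_{\phi_t}$ is the larger piece at $P_t$ while $\hat h^-_{\phi_t}$ is the larger piece at $Q_t$. The only relations are $\hat h^+(P_t)=d^n\hat h^+(Q_t)$ and $\hat h^-(Q_t)=d^n\hat h^-(P_t)$, so the factor $d^n$ multiplies the two \emph{small} pieces. The values $\hat h^+(P_t)=\hat h^-(Q_t)=A$ and $\hat h^-(P_t)=\hat h^+(Q_t)=Ad^{-n}$, with $A\asymp\max\{h(b_t),1\}$, satisfy every inequality you use, for every $n$. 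Renaming $P\leftrightarrow Q$ just reproduces this configuration. Nor is it a degenerate case: it is exactly what happens when the orbit segment from $Q_t$ to $P_t$ passes through the low-height part of the orbit, which is the typical situation when $|N|$ is large.

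The missing idea is to apply Theorem 1.1 at an interior point of the segment. Every point of the orbit is non-periodic, so the theorem applies there with the same $\epsilon$. Take $m=\lfloor n/2\rfloor$ and $R=\phi_t^{m}(Q_t)$; using $2m\le n$,
\[
\hat h_{\phi_t}(R)=d^{m-n}\hat h^+_{\phi_t}(P_t)+d^{-m}\hat h^-_{\phi_t}(Q_t)\le d^{-m}\bigl(\hat h_{\phi_t}(P_t)+\hat h_{\phi_t}(Q_t)\bigr).
\]
Hence $\epsilon\max\{h(b_t),1\}\le d^{-\lfloor n/2\rfloor}\bigl(2h(P_t)+2h(Q_t)+O(h(b_t)+1)\bigr)$, which bounds $n$ uniformly in all sign cases at once. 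Equivalently, $\hat h^+\hat h^-$ is constant along orbits, and Theorem 1.1 applied along the whole orbit forces it to be at least $(d+1)^{-2}\epsilon^2\max\{h(b_t),1\}^2$.

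Separately, your aside that constant $b$ is ``handled directly'' is unsupported. In that case $\max\{h(b_t),1\}$ is bounded while $h(P_t),h(Q_t)$ grow with $h_C(t)$, so the height comparison gives nothing. Either assume $b$ nonconstant (otherwise $\eta=0$ and $s$-integrality is vacuous), or supply a genuinely different argument for that case.
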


In Section 2 we recall facts about local canonical heights and prove the preliminary lemmas that are necessary for the proofs, which are given in Section 3.
\section{Preliminary properties and bounds for the size of iterates }
Given 
$$
\phi(x,y)=(y,x+y^d+b)
$$ defined over a number field $K$. We start recalling some basic information about the theory of local canonical heights for such H\'{e}non maps. Taking $\lVert (x,y)\rVert_v=\max \{|x|_v,|y|_v \},$ the local canonical heights for $\phi$ are defined by
$$
\hat{\lambda}^+_{v,\phi}(P)=\lim_{N \rightarrow \infty}\dfrac{\log^+\lVert\phi^N(P)\rVert}{d^N}, \text{   } \text{   }\text{   }\text{  }\hat{\lambda}^-_{v,\phi}(P)=\lim_{N \rightarrow \infty}\dfrac{\log^+\lVert\phi^{-N}(P)\rVert}{d^N},
$$and
$$
\hat{\lambda}_{v,\phi}(P)=\hat{\lambda}^+_{v,\phi}(P)+\hat{\lambda}^-_{v,\phi}(P),
$$ for which one naturally has 
$\hat{\lambda}^+_{v,\phi}(\phi(P))=d\hat{\lambda}^+_{v,\phi}(P)$ and $\hat{\lambda}^-_{v,\phi}(\phi^{-1}(P))=d\hat{\lambda}^-_{v,\phi}(P)$ ( see \cite[Lemma 2.1]{Ingram1}). 

According to \cite{Kawaguchi2}, the canonical heights for H\'{e}non maps decompose as sums of local canonical heights, namely,
$$
\hat{h}^+_{v,\phi}(P)=\sum_{v \in M_K} \dfrac{[K_v:\mathbb{Q}_v]}{[K:\mathbb{Q}]}\hat{\lambda}^+_{v,\phi}(P)
$$and
$$
\hat{h}^-_{v,\phi}(P)=\sum_{v \in M_K} \dfrac{[K_v:\mathbb{Q}_v]}{[K:\mathbb{Q}]}\hat{\lambda}^-_{v,\phi}(P).
$$

We set 
\[
(r)_v=
\begin{cases}
r & \text{ if } v \text{ is archimedean},\\ 
1& \text{ otherwise}
\end{cases}
\]
and define neighbourhoods
$$
\mathcal{B}_v^+(\phi)=\{(x,y) \in \mathbb{A}^2(K): |y|_v^d>(3)_v\max\{|x|_v,|b|_v,1 \}
$$and
$$
\mathcal{B}_v^-(\phi)=\{(x,y) \in \mathbb{A}^2(K): |x|_v^d>(3)_v\max\{|y|_v,|b|_v,1 \}.
$$

We describe some properties of local heights in the following lemma, which is a straight-forward
generalization of \cite[Lemma 4.1]{Ingram1}
\begin{lemma} 
The set $\mathcal{B}_v^+(\phi)$ is closed under the action of $\phi$, and 
$$
\hat{\lambda}^+_{v,\phi}(x,y)=\log|y|_v +\epsilon^+(b,P,v)
$$ for $P=(x,y) \in \mathcal{B}_v^+(\phi),$ where $\epsilon^+=0$ for $v \in M_K^0$, and $-\log 3 \leq \epsilon^+ \leq \log 5/3$ otherwise. Similarly, the set $\mathcal{B}_v^-(\phi)$ is closed under the action of $\phi^{-1}$, and 
$$
\hat{\lambda}^-_{v,\phi}(x,y)=\log|x|_v+\epsilon^-(b,P,v)
$$ for $(x,y) \in \mathcal{B}_v^-(\phi),$ where $\epsilon^-=0$ for $v \in M_K^0$, and $-\log 3 \leq \epsilon^- \leq \log 5/3$ otherwise. 
\end{lemma}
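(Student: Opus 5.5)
Write $P=(x,y)\in\mathcal{B}_v^+(\phi)$ and $\phi(P)=(y,y')$ with $y'=x+y^d+b$. The plan is to first prove invariance together with a two-sided estimate $|y'|_v\approx |y|_v^d$. From there, a telescoping sum gives the local height, and the whole argument is symmetric under $\phi^{-1}$.

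\textbf{Step 1 (invariance and the one-step estimate).} Set $M=\max\{|x|_v,|b|_v,1\}$, so that $|y|_v^d>(3)_vM$. In the non-archimedean case, $|x+b|_v\le M<|y|_v^d$, so the ultrametric inequality gives $|y'|_v=|y|_v^d$ exactly. In the archimedean case, $|x+b|_v\le 2M<\tfrac23|y|_v^d$, so
\[
\tfrac13|y|_v^d\le |y'|_v\le \tfrac53|y|_v^d .
\]
To check that $\phi(P)\in\mathcal{B}_v^+(\phi)$, we need $|y'|_v^d>(3)_v\max\{|y|_v,|b|_v,1\}$. Since $|y|_v^d>(3)_v\ge1$, we have $|y|_v>1$, and also $|y|_v^d>|b|_v$. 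Hence $\max\{|y|_v,|b|_v,1\}\le |y|_v^d$, and it suffices to show $|y'|_v^d>(3)_v|y|_v^d$.
\begin{itemize}
\item Non-archimedean: $|y'|_v^d=|y|_v^{d^2}>|y|_v^d$ because $|y|_v>1$.
\item Archimedean: $|y'|_v^d\ge 3^{-d}|y|_v^{d^2}$, so we need $|y|_v^{d(d-1)}>3^{d+1}$.
\end{itemize}
The archimedean inequality is the main obstacle, because $|y|_v^d>3$ only gives $|y|_v>3^{1/d}$. For $d=2$ this yields $|y|_v^{2}>3$, which is short of the required $27$. I would therefore sharpen the lower bound: from $|x+b|_v\le 2M$ and $|y|_v^d>3M$ we get $|y'|_v\ge |y|_v^d-2M>\tfrac13|y|_v^d$ and also $|y'|_v>M$. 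The cleanest approach is to compare directly with $\max\{|y|_v,|b|_v,1\}\le\max\{|y|_v,M\}$. Using $|y'|_v\ge |y|_v^d-2M\ge |y|_v^d/3$ together with $|y|_v^d>3$, I would check that $|y'|_v^d>3|y|_v$ and $|y'|_v^d>3M$; this reduces to the elementary inequality $(t^d/3)^d>3t$ for $t^d>3$. If that inequality fails for small $d$, I would adjust the constant $(3)_v$ (the paper says the statement is a straightforward generalization of Ingram, so the intended constants should work), possibly by using $|y'|_v\ge |y|_v^d-2M$ more carefully rather than the crude $\tfrac13|y|_v^d$.

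\textbf{Step 2 (the local height formula).} Write $\phi^N(P)=(y_{N-1},y_N)$ with $y_0=y$. By invariance, every iterate lies in $\mathcal{B}_v^+(\phi)$, so $\|\phi^N(P)\|_v=|y_N|_v>1$ and $\log^+\|\phi^N(P)\|_v=\log|y_N|_v$. Step 1 gives $\log|y_{N+1}|_v=d\log|y_N|_v+\delta_N$, with $\delta_N=0$ in the non-archimedean case and $\delta_N\in[-\log3,\log\tfrac53]$ otherwise. Telescoping yields
\[
\frac{\log|y_N|_v}{d^N}=\log|y|_v+\sum_{n=0}^{N-1}\frac{\delta_n}{d^{n+1}} .
\]
Letting $N\to\infty$ gives $\epsilon^+=\sum_n\delta_n/d^{n+1}$. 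Its absolute value is bounded by the $\delta$-bounds times $\sum_{n\ge0}d^{-n-1}=1/(d-1)\le1$, so $-\log3\le\epsilon^+\le\log\tfrac53$.

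\textbf{Step 3 (the inverse map).} We have $\phi^{-1}(x,y)=(y-x^d-b,\,x)$. The roles of the two coordinates are swapped, and the sign changes do not affect absolute values. So the same argument, applied to $\mathcal{B}_v^-(\phi)$ with $x$ in place of $y$, gives the statement for $\hat\lambda^-_{v,\phi}$.
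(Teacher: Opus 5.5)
Your Step 1 breaks down exactly where you suspected, and it cannot be repaired as stated. The inequality you reduce to, $(t^d/3)^d>3t$ for $t^d>3$, fails for every $d\ge2$: as $t^d\downarrow 3$ the left side tends to $1$ and the right side tends to $3^{1+1/d}$. Using $|y'|_v\ge|y|_v^d-2M$ more carefully does not help, because for archimedean $v$ the set $\mathcal{B}_v^+(\phi)$ is not $\phi$-invariant. Take $d=2$, $b=-1$, $P=(-1,2)$. Then $|y|_v^2=4>3=3\max\{1,1,1\}$, so $P\in\mathcal{B}_v^+(\phi)$. But $\phi(P)=(2,2)$ and $4<6=3\max\{2,1,1\}$. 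For general $d$, take $x=b=-1$ and real $y$ slightly above $3^{1/d}$. Then $\phi(P)=(y,y^d-2)$ and $(y^d-2)^d\approx 1<3y$. Likewise, for $d=2$, $b=-1$, we have $(2,1)\in\mathcal{B}_v^-(\phi)$ but $\phi^{-1}(2,1)=(-2,2)\notin\mathcal{B}_v^-(\phi)$, so your Step 3 inherits the same problem.

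The paper's proof does not resolve this either. It asserts $(1-\frac23)|y|_v^d>|y|_v$, which needs $|y|_v^{d-1}>3$ and fails in these examples. It then deduces $\phi(P)\in\mathcal{B}_v^+(\phi)$ from $|y'|_v>|y|_v$, which would not suffice anyway, since membership requires $|y'|_v^d>3\max\{|y|_v,|b|_v,1\}$. Your fallback of adjusting the constant $(3)_v$ amounts to proving a different lemma, and the sets $\mathcal{B}_v^\pm(\phi)$ with this constant are reused in the later lemmas of Section 2.

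What does hold is the following. The non-archimedean case is fine as you wrote it. In the archimedean case, invariance holds on the smaller set $\mathcal{B}_v^+(\phi)\cap\{|y|_v>3\}$. There $|y'|_v>\frac13|y|_v^d>|y|_v>3$, using $|y|_v^{d-1}>3$. Hence $|y'|_v^d\ge|y'|_v^2>3|y'|_v$, and $3|y'|_v$ exceeds each of $3|y|_v$, $|y|_v^d>3|b|_v$, and $3$. On this set every iterate satisfies $|y_{n+1}|_v>|y_n|_v>3$, so $\|\phi^N(P)\|_v=|y_N|_v$. Your telescoping argument is then valid and gives $-\frac{\log 3}{d-1}<\epsilon^+<\frac{\log(5/3)}{d-1}$.

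For $P\in\mathcal{B}_v^+(\phi)$ with $|y|_v\le 3$, the lower bound $\epsilon^+\ge-\log 3$ is immediate from $\hat\lambda^+_{v,\phi}\ge 0$. The upper bound $\epsilon^+\le\log\frac53$ needs a separate argument, because the orbit can leave $\mathcal{B}_v^+(\phi)$ before escaping, as in the examples above. Your Step 2 presupposes that all iterates lie in $\mathcal{B}_v^+(\phi)$, so it does not cover these points.
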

\begin{proof}Let $P=(x,y) \in \mathcal{B}_v^+(\phi).$ Suppose first that $v$ is non-archimedean. Then, $|y|_v^d>|x|_v$, and $|y|_v^d>|b|_v$, and so
$$
|x+y^d+b|_v=|y^d|_v>|y|_v.  
$$Thus, $\phi(P)\in \mathcal{B}_v^+(\phi), \lVert\phi^N(P)\rVert_v=|y|_v^{d^N}$ and $\hat{\lambda}_v^+(P)=\log|y|_v$ as desired.

Now suppose that $v$ is archimedean.  Then, $\frac{1}{3}|y|^d>|x|_v$, and $\frac{1}{3}|y|^d>|b|_v$, and so
$$
\left( 1+\frac{2}{3}\right)|y^d|_v>|x+y^d+b|_v> \left( 1-\frac{2}{3}\right)|y^d|_v>|y|_v.  
$$Thus, $\phi(P)\in \mathcal{B}_v^+(\phi)$ and
    $$
\frac{5}{3}|y^d|_v>\lVert \phi(P) \rVert_v>  \frac{1}{3}|y^d|_v.$$ By induction and taking limits, we obtain
$$
\log 5/3 > \hat{\lambda}_v^+(P)-\log|y|_v >-\log 3.
$$ The proofs for the facts about $\hat{\lambda}_v^-(P)$ and $\phi^{-1}(P)$ are analogous.
\end{proof}
The next result shows that points outside of the neighbourhoods $\mathcal{B}_v^+(\phi)$ and $\mathcal{B}_v^-(\phi)$ group $v$-adically.
\begin{lemma}
Let $P=(x,y) \notin \mathcal{B}_v^+(\phi)\bigcup \mathcal{B}_v^-(\phi)$
Then
$$
\Vert (x,y) \rVert_v \leq (3)_v\max \{1,|b|_v \}^{1/d}.
$$Moreover, if $\phi^{-1}(P),\phi(P) \notin \mathcal{B}_v^+(\phi)\bigcup \mathcal{B}_v^-(\phi),$ then there are roots $\gamma_1^d=-b$ and $\gamma_2^d=-b$ such that
$$
|x-\gamma_2|_v,|y-\gamma_1|_v\leq (6\cdot2^{d-1})_v|d|_v^{-1}|b|^{(2-d)/d}_v.
$$
\end{lemma}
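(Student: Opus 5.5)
The first claim comes straight from negating the two defining inequalities; the second comes from reading $\phi^{\pm1}$ at $P$ and perturbing a root of $t^d=-b$. Write $M=\max\{1,|b|_v\}$. Since $P\notin\mathcal{B}_v^+(\phi)$ we have $|y|_v^d\le (3)_v\max\{|x|_v,M\}$, and since $P\notin\mathcal{B}_v^-(\phi)$ we have $|x|_v^d\le (3)_v\max\{|y|_v,M\}$. I will split on which coordinate realizes $\lVert P\rVert_v$; by symmetry take $|y|_v\ge|x|_v$. If $|y|_v\le M$ the bound is $|y|_v \le M \le (3)_vM^{1/d}$ wait—this needs care. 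More precisely, from $|y|_v^d\le (3)_v\max\{|y|_v,M\}$ there are two cases. If the max is $M$, then $|y|_v\le (3)_v^{1/d}M^{1/d}\le (3)_vM^{1/d}$. If the max is $|y|_v$, then $|y|_v^{d-1}\le (3)_v$, so $|y|_v\le (3)_v\le (3)_vM^{1/d}$. The case $|x|_v\ge|y|_v$ is identical using the second inequality, which gives the first assertion.

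For the second assertion I will use the explicit formulas $\phi(x,y)=(y,\,x+y^d+b)$ and $\phi^{-1}(x,y)=(y-x^d-b,\,x)$. Applying the first part to $\phi(P)$ bounds $|x+y^d+b|_v\le (3)_vM^{1/d}$. Combined with $|x|_v\le (3)_vM^{1/d}$, this gives
\[|y^d+b|_v\le (6)_vM^{1/d}.\]
Similarly, $\phi^{-1}(P)$ gives $|x^d+b|_v\le (6)_vM^{1/d}$. Factor $y^d+b=\prod_i(y-\gamma^{(i)})$ over the $d$ roots of $t^d=-b$, and choose $\gamma_1$ to be the nearest root to $y$. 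The remaining step is to convert the smallness of the product into smallness of $|y-\gamma_1|_v$.

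That conversion is the main step. For $j\ne 1$, the triangle inequality gives
\[|y-\gamma^{(j)}|_v\ge \tfrac12\bigl(|y-\gamma_1|_v+|y-\gamma^{(j)}|_v\bigr)\ge \tfrac12|\gamma_1-\gamma^{(j)}|_v\]
in the archimedean case, and the ultrametric analogue without the factor $\tfrac12$ in the non-archimedean case. Hence
\[|y^d+b|_v\ge |y-\gamma_1|_v\,(2^{-(d-1)})_v\prod_{j\ne1}|\gamma_1-\gamma^{(j)}|_v=|y-\gamma_1|_v\,(2^{-(d-1)})_v\,|d\gamma_1^{d-1}|_v.\]
Here I used that the product of root differences equals the derivative $d\gamma_1^{d-1}$, and $|\gamma_1|_v^{d-1}=|b|_v^{(d-1)/d}$. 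It follows that
\[|y-\gamma_1|_v\le (6\cdot 2^{d-1})_v|d|_v^{-1}M^{1/d}|b|_v^{-(d-1)/d}.\]
When $|b|_v\ge1$ this is exactly $(6\cdot2^{d-1})_v|d|_v^{-1}|b|_v^{(2-d)/d}$. The analogous argument on $x^d+b$ gives $\gamma_2$.

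The expected obstacle is the case $|b|_v<1$, where $M=1$ and the exponent does not match directly. I expect the paper intends $|b|_v\ge1$ here: this is the only case in which the estimate is later needed, namely the places dividing $b$, i.e. poles of $b$. So I would state and prove the result under that assumption, noting that the non-archimedean case has constants equal to $1$ and the same computation goes through.
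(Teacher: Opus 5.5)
Your first part is correct; the case split on which coordinate realizes $\lVert P\rVert_v$ is tidier than the paper's contradiction argument with $|y|_v=((3)_v|b|_v^{1/d})^c$. Just delete the abandoned line claiming $M\le (3)_vM^{1/d}$. For $|b|_v\ge 1$, your second part is the paper's argument. The gap is the case $|b|_v<1$, which you do not prove but propose to remove from the statement. The lemma has no hypothesis on $|b|_v$, and your justification for dropping the case is mistaken. Lemma 2.3 is stated for every archimedean $v$, with no condition $|b|_v>1$, and its proof invokes exactly this root-approximation bound. So archimedean places with $|b|_v<1$ genuinely need it.

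The missing step is short and needs no perturbation; your estimate could not give it anyway, since it loses a factor $|b|_v^{-1/d}>1$. Suppose $|b|_v\le 1$. Every root $\gamma$ of $t^d=-b$ satisfies $|\gamma|_v=|b|_v^{1/d}\le 1$, and your first part gives $|y|_v\le (3)_v$. Hence $|y-\gamma|_v\le (2)_v\max\{|y|_v,|\gamma|_v\}\le (6)_v$. On the other side, $(6\cdot 2^{d-1})_v|d|_v^{-1}|b|_v^{(2-d)/d}\ge (6)_v$, for three reasons: $|b|_v^{(2-d)/d}\ge 1$ when $d\ge 2$ and $|b|_v\le 1$; $|d|_v^{-1}\ge 1$ at non-archimedean $v$; and $2^{d-1}/d\ge 1$ at archimedean $v$. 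The same bound holds for $x$, so any choice of roots $\gamma_1,\gamma_2$ works. With this case added, as the paper does, your proof covers the lemma as stated.
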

\begin{proof} For the first part of the lemma, if $|b|_v\leq 1$,  then $|y|^d_v\leq \max\{1,|x|_v \}$ and $|x|^d_v\leq \max\{1,|y|_v \}$ imply $|x|_v,|y|_v\leq (3)_v$. 
Thus, we only need to consider $|b|_v>1.$

Suppose also that $|y|_v=((3)_v|b|^{1/d}_v)^c$ for some $c>1.$ Then
$$
(3)_v^{dc}|b|_v^c=|y|_v^d\leq (3)_v\max \{|b|_v, |x|_v \},
$$and so $1<(3)_v^{dc-1}|b|_v^c\leq |x|_v$, which implies
$$
(3)_v^{d^2c-d}|b|_v^{dc}\leq|x|^d_v\leq (3)_v\max \{|b|_v,|y|_v \}=(3)_v|y|_v=(3)_v^{1+c}|b|^{c/d}_v
$$ and 
$$
(3)^{(d^2-1)c-d-1}_v|b|_v^{(d^2-1)c/d}\leq 1,
$$which is a contradiction for $|b|_v>1.$

The proof of $|x|_v\leq (3)_v |b|_v^{1/d}$ follows analogously.

For the second part of the lemma in the non-archimedean case, assuming $\phi(P),\phi^{-1}(P) \notin \mathcal{B}_v^+(\phi)\bigcup \mathcal{B}_v^-(\phi)$ and $|b|_v>1$, we obtain
$$
|y^d+b|_v\leq (2)_v\max \{|x|_v,|x+y^d+b|_v \}\leq (6)_v |b|_v^{1/d}.
$$
Let $\gamma$ be a $d$-th root of $-b$ that minimizes $|y-\gamma|_v$ and denote by $\gamma_1,...,\gamma_{d-1}$ all the other $d$-th roots of $-b.$ Then
$$
|\gamma-\gamma_i|_v\leq |(\gamma-y)+(y-\gamma_i)|_v\leq (2)_v |y-\gamma_i|_v \text{ for }i=1,2,...,d-1,
$$ and so
$$
|y-\gamma|_v\leq \dfrac{(6)_v|b|_v^{1/d}}{\prod_i^{d-1} |y-\gamma_i|_v}\leq \dfrac{(6\cdot2^{d-1})_v|b|_v^{1/d}}{\prod_i^{d-1} |\gamma-\gamma_i|_v}=\dfrac{(6\cdot2^{d-1})_v|b|_v^{1/d}}{|d|_v|\gamma|_v^{d-1}}=(6\cdot2^{d-1})_v|d|_v^{-1}|b|^{(2-d)/d}_v
$$


The same argument applied to $|x^d+b|_v\leq(2)_v\max\{|y|_v, |y-x^d-b|_v \}$ yields the claim.

If $|b|_v\leq 1$, then
$$
|y-\gamma|_v\leq (2)_v\max\{ |y|_v,|\gamma|_v\}\leq(6)_v
$$for any $\gamma$ which is a $d$-th root of $-b$, implying the claim.
    
\end{proof}
For the next lemmas, which are the main tools for the main proof, consider $P_j=(x_j,y_j)=\phi^j(P),$ and $[M,N]:=\{M,M+1,\cdots,N-1,N \}$ for any $M, N$ in $\mathbb{Z}.$
\begin{lemma}
    Let $I \subset [-M,M]$ such that $\#I \geq 2,$ and suppose that $v$ is archimedean, or non-archimedean with $|b|_v>1$. Then there exists a subset $J \subset I$ with $\#J \geq\dfrac{1}{d^4+2}\#I-1$ such that for all $i\neq j \in J,$
    $$
    |x_i-x_j|_v+|y_i-y_j|_v+ \lambda_v(b)\leq 3\cdot d^M\hat{\lambda}_{v,\phi}(P)+\alpha_v
    $$where by convention the inequality holds if $x_i=x_j$ or $y_i=y_j$, and where

    \[
\alpha_v=
\begin{cases}
B(d) & \text{ if } v \text{ is archimedean},\\ \\
6 \log |d|^{-1}_v & \text{ otherwise}
\end{cases}
\]
    with $B(d):=2\log \left(\dfrac{6(1152\cdot 2^d)^d}{d^{d+1}}\right).$
\end{lemma}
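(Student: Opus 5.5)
We read the left-hand side as $\log|x_i-x_j|_v+\log|y_i-y_j|_v+\lambda_v(b)$ with $\lambda_v(b)=\log^+|b|_v$. This is what the convention ``the inequality holds if $x_i=x_j$ or $y_i=y_j$'' (that is, $\log 0=-\infty$) indicates. The plan is to adapt \cite[Lemma 4.2]{Ingram1}: split the indices of $I$ according to whether the orbit is near the ``attracting'' regions $\mathcal{B}_v^\pm(\phi)$ or near the finite set of points $(\gamma_2,\gamma_1)$ with $\gamma_i^d=-b$, and pigeonhole on the latter.

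\textbf{Step 1: indices touching $\mathcal{B}_v^\pm$.} Suppose some $P_k$ with $k\in[-M+1,M-1]$ lies in $\mathcal{B}_v^+(\phi)$. By the first lemma, $P_{k+1}\in\mathcal{B}_v^+(\phi)$ and $|y_{k+1}|_v\ge (3)_v^{-1}|y_k|_v^d>\max\{|b|_v,1\}$. Hence
\[
\lambda_v(b)\le d^{k+1}\hat\lambda^+_{v,\phi}(P)+\log(3)_v\le d^M\hat\lambda_{v,\phi}(P)+O(1).
\]
The case of $\mathcal{B}_v^-(\phi)$ is symmetric, using $\phi^{-1}$.

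For every $i\in[-M,M]$ we also bound $\log\lVert P_i\rVert_v$ by a dichotomy. If $P_i\in\mathcal{B}_v^\pm$, the first lemma gives $\log\lVert P_i\rVert_v\le d^{|i|}\hat\lambda_{v,\phi}(P)+\log 3$. Otherwise the second lemma gives $\log\lVert P_i\rVert_v\le \frac1d\lambda_v(b)+\log(3)_v$.

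Now $\log|x_i-x_j|_v\le \log(2)_v+\max\{\log\lVert P_i\rVert_v,\log\lVert P_j\rVert_v\}$, and similarly for $y$. Combining these bounds, whenever any such $P_k$ exists we get the inequality for all pairs in $I$ with constant $3$ and an absolute $\alpha_v$. The only gap is the two boundary indices $k=\pm M$, which are handled by discarding them. This discarding is one source of the ``$-1$'' in the bound on $\#J$.

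\textbf{Step 2: pigeonhole.} Otherwise, for every $i\in I$ away from the boundary, $P_{i-1},P_i,P_{i+1}\notin\mathcal{B}_v^+\cup\mathcal{B}_v^-$. The second lemma then attaches to $P_i$ a pair of roots $(\gamma_2,\gamma_1)$ with
\[
|x_i-\gamma_2|_v,\ |y_i-\gamma_1|_v\le (6\cdot 2^{d-1})_v|d|_v^{-1}|b|_v^{(2-d)/d}.
\]
Label each such $i$ by this pair, refined if necessary by the pairs attached to $P_{i\pm1}$ (using $x_{i+1}=y_i$). This leaves at most $d^4$ labels. Together with the exceptional indices this produces the denominator $d^4+2$. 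Let $J$ be the largest label class, so that $\#J\ge \#I/(d^4+2)-1$. For $i\ne j\in J$,
\[
\log|x_i-x_j|_v+\log|y_i-y_j|_v\le 2\log\bigl((12\cdot 2^{d-1})_v|d|_v^{-1}\bigr)+\tfrac{2(2-d)}{d}\lambda_v(b).
\]
Adding $\lambda_v(b)$ leaves the exponent $(4-d)/d$ on $\lambda_v(b)$. For $d\ge4$ this exponent is $\le0$, and the inequality holds with $\alpha_v$ of the stated shape: $6\log|d|_v^{-1}$ in the non-archimedean case, and $B(d)$ in the archimedean case from the explicit constants.

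\textbf{Main obstacle.} I expect the main obstacle to be the cases $d=2,3$, where the pigeonhole estimate alone leaves a positive multiple of $\lambda_v(b)$. There I would sharpen the closeness estimate by using the dynamics more precisely. From $y_{i+1}=x_i+y_i^d+b$ with both $x_i$ and $y_{i+1}$ near roots, one gets $|y_i^d+b|_v=|y_{i+1}-x_i|_v$, which is small whenever the labels of consecutive points match. This is the purpose of the refined labelling above. Iterating the closeness estimate once more should give $|y_i-\gamma_1|_v\ll |b|_v^{(1-d)/d}$, which suffices for every $d\ge2$. If that fails, I would fall back on absorbing the excess $\lambda_v(b)$ into $3d^M\hat\lambda_{v,\phi}(P)$, using non-periodicity as Ingram does for $d=2$.
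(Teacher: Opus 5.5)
Your Step 1 is fine. It is actually a cleaner route than the paper's split according to whether a fraction $\frac{1}{d^4+2}$ of $I$ lies in $\mathcal{B}_v^+(\phi)$ or in $\mathcal{B}_v^-(\phi)$: a single interior witness already gives $\lambda_v(b)\le d^M\hat\lambda_{v,\phi}(P)+O(1)$, and then $J=I$ works. There are two small slips, both affecting only constants. First, Lemma 2.1 controls only $|y_i|_v$ on $\mathcal{B}_v^+(\phi)$, so $|x_i|_v$ needs its own bound: if $P_i\notin\mathcal{B}_v^-(\phi)$ then $|x_i|_v\le 3^{1/d}|y_i|_v$, and otherwise use the $\mathcal{B}_v^-$ half of Lemma 2.1. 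Second, for archimedean $v$ with $|b|_v<1$ you should use $\lVert P_i\rVert_v\le 3$ rather than the root estimate.

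The genuine gap is Step 2 for $d=2,3$, which the lemma must cover. Comparing each coordinate with a root gives only distance $\ll|b|_v^{(2-d)/d}$, hence the exponent $(4-d)/d>0$. The sharpening you hope for is false, not merely unproved. The quantity $|y_i^d+b|_v=|y_{i+1}-x_i|_v$ is small only when $x_i=y_{i-1}$ and $y_{i+1}$ shadow the same root, and nothing forces this. Take $d=2$, $b=-c^2$ with $|c|_v$ large (say $v$ archimedean). The orbits bounded at $v$ form a horseshoe in which $y_n$ shadows $s_nc$ for an arbitrary sign sequence $(s_n)$. Whenever $s_{i-1}\neq s_{i+1}$ one gets $y_i^2-c^2\approx\pm 2c$, so $|y_i-s_ic|_v\asymp 1$ instead of $\ll|b|_v^{-1/2}$. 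For the periodic pattern $++--$ this happens at every index.

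The fallback through non-periodicity is not available either. The lemma is a purely local inequality, and $\hat\lambda_{v,\phi}(P)$ can be arbitrarily small compared with $\lambda_v(b)$ for points whose orbit segment stays in the bounded region at $v$. Non-periodicity only enters once the local estimates are summed over all places.

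What is missing is to compare two indices of the same class with each other rather than each with a root. Your refined labels are exactly the right data: the roots near $x_{i-1},x_i,y_i,y_{i+1}$, giving $d^4$ classes as in the paper. For $i\neq j$ in one class, subtract $y_{n+1}=x_n+y_n^d+b$ and $x_{n-1}=y_n-x_n^d-b$ at $n=i$ and $n=j$:
\[
y_i^d-y_j^d=(y_{i+1}-y_{j+1})-(x_i-x_j),\qquad x_i^d-x_j^d=(y_i-y_j)-(x_{i-1}-x_{j-1}).
\]
Every difference on the right is at most $(6\cdot 2^{d})_v|d|_v^{-1}|b|_v^{(2-d)/d}$ because the labels agree. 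The offsets from the roots cancel, which is exactly what fails when you compare $x_i$ with $y_{i+1}$.

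Since $y_i,y_j$ (resp.\ $x_i,x_j$) lie near a common root $\gamma$ with $|\gamma|_v=|b|_v^{1/d}$, the quotient $S_d=(y_i^d-y_j^d)/(y_i-y_j)$ satisfies $|S_d|_v\asymp|d|_v|b|_v^{(d-1)/d}$. This holds once $|b|_v$ exceeds a threshold depending on $d$, and also on $|d|_v$ when $|d|_v<1$. Consequently $|x_i-x_j|_v,|y_i-y_j|_v\ll|d|_v^{-2}|b|_v^{(3-2d)/d}$, and the total exponent becomes $\frac{2(3-2d)}{d}+1=\frac{6-3d}{d}\le 0$ for every $d\ge 2$. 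Below the threshold, the crude estimate is absorbed into $\alpha_v$. The horseshoe example shows this directly: $y_i-s_ic\approx s_i(s_{i+1}-s_{i-1})/2$ depends only on the labels, so it cancels in $y_i-y_j$.
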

\begin{proof}
    First, if there is a subset $J_0 \subset I$ with $\#J_0\geq \dfrac{1}{d^4+2}\#I$ such that $P_j \in \mathcal{B}_v^+(\phi)$ for all $j \in J_0,$ then for all $j \in J_0$ we have
    $$
    \log|y_j|_v\leq \hat{\lambda}_{v,\phi}(P_j)+(\log 3)_v\leq d^M\hat{\lambda}_{v,\phi}(P)+(\log 3)_v
    $$ and also
    $$
    \log|x_j|_v=\log|y_{j-1}|_v\leq d^{M-1}\hat{\lambda}_{v,\phi}(P)+(\log 3)_v
    $$ as long as $j \neq \min J_0$ and $P_{j-1} \in \mathcal{B}_v^+(\phi).$ Moreover, 
    $$
    \lambda_v(b)\leq d\min\{|y_i|_v,|y_j|_v\}-(\log 3)_v \leq d^M\hat{\lambda}_{v,\phi}(P)+(\log 3)_v.
    $$Thus, for all $i \neq j \in J:=J_0\setminus \{\min J_0\}$, we have
   \begin{equation}\begin{split}
\log|x_i-x_j|_v+\log|y_i-y_j|_v + \lambda_v(b)&\leq \log \max \{|x_i|_v,|x_j|_v \}+ \log \max \{|y_i|_v,|y_j|_v \} \\
&\text{ } \text{ } \text{ }+\lambda_v(b)+ (\log 4)_v \\ & 
 \leq d^M\hat{\lambda}_{v,\phi}(P)+d^{M-1}\hat{\lambda}_{v,\phi}(P) \\
& \text{ } \text{ } \text{ } + d^M\hat{\lambda}_{v,\phi}(P)+ (\log 108)_v\\
& \leq 3d^M\hat{\lambda}_{v,\phi}(P)+ (\log 108)_v,
\end{split}
\end{equation} which proves the result in this case with such choice of $J.$ 

Analogously, one proves the result in case there exists $J_0 \subset I$ with $\#J_0\geq \dfrac{1}{d^4+2} \#I$ such that $P_j \in \mathcal{B}_v^-(\phi)$ for all $j \in J_0.$ 

Assume then that no such set exists. In this case, there is a subset $J_0 \subset I$ with $\#J_0\geq \dfrac{d^4}{d^4+2}\#I$ such that $P_j \notin \mathcal{B}_v^+(\phi)\cup \mathcal{B}^-_v(\phi)$ for all $j \in J_0.$ By Lemma 2.2, for each $j \in [ \min(J_0)+2,\max J_0-2]$, there exist
$$
\gamma_{1j}^d=\gamma_{2j}^d=\gamma_{3j}^d=\gamma_{4j}^d=-b
$$ such that
$$
|x_j-\gamma_{1j}|_v,|x_{j-1}-\gamma_{2j}|_v,|y_j-\gamma_{3j}|_v,|y_{j+1}-\gamma_{4j}|_v\leq (6\cdot2^{d-1})_v|d|_v^{-1}|b|^{(2-d)/d}_v
$$By the pigeonhole principle, there is a subset $J\subset J_0$ with
$$
\#J \geq \frac{1}{d^4}(\#J_0-4)\geq \frac{1}{d^4+2}\#I -1
$$such that $\gamma_{nj}$ is the same for each $n$ for all $j \in J.$ This implies
$$
|x_i-x_j|_v,|y_i-y_j|_v,|x_{i-1}-x_{j-1}|_v,|y_{i+1}-y_{j+1}|_v\leq (6\cdot2^{d})_v|d|_v^{-1}|b|^{(2-d)/d}_v
$$for all $i,j \in J.$ Then, for $i,j \in J$
\begin{equation}
    \begin{split}
    |y_i^d-y_j^d|_v&\leq (2)_v\max\{|y_i^d-y_j^d+x_i-x_j|_v,|x_i-x_j|_v \}\\
    &\leq (2)_v \max\{|y_{i+1}-y_{j+1}|_v,|x_i-x_j|_v \}\\
    &\leq (6\cdot2^{d+1})_v|d|_v^{-1}|b|^{(2-d)/d}_v.
\end{split}
\end{equation}

Also, if $v$ is non-archimedean and $|d|_v=1$, then
$$
|y_i^{d-1}+y_i^{d-2}y_j+...+y_iy_j^{d-2}+y_j^{d-1}|_v = |b|^{(d-1)/d}_v.$$ 
In fact, if $\epsilon:=y_j-y_i$, then $|\epsilon|_v\leq |b|_v^{(2-d)/d}< 1$ and since $y_i=y_j-\epsilon$ we have
$$
y_j^d-y_i^d=y_j^d-(y_j-\epsilon)^d=dy_j^{d-1}\epsilon-\binom{d}{2}y_j^{d-2}\epsilon^2+...\pm \epsilon^d
$$ whose absolute value comes from $dy_j^{d-1}\epsilon$, which is $|b^{(d-1)/d}\epsilon|_v$. This implies $$|(y_i^d-y_j^d)/\epsilon|_v=|b|^{(d-1)/d}_v$$ as claimed.

The claim yields $|y_i-y_j|_v\leq |b|^{(2-d)/d-(d-1)/d}_v=|b|_v^{\frac{3-2d}{d}}$. We analogously obtain $$|x_i-x_j|_v\leq |b|_v^{\frac{3-2d}{d}}.$$ This gives
$$
|x_i-x_j|_v+|y_i-y_j|_v+\lambda_v(b)\leq|x_i-x_j|_v+|y_i-y_j|_v+\dfrac{2(2d-3)}{d}\lambda_v(b)\leq 0 \text{ as desired.}
$$If $|d|_v<1,$ then $dy_j^{d-1}\epsilon$ will have the largest absolute value of the terms in the sum above for $y_i^d-y_j^d$ when $|dy_j|_v>|\epsilon|_v$, e.g., when $|b|^{1/d}_v>|d|^{-2}_v|b|^{(2-d)/d}_v$, which implies $|b|_v>|d|_v^{-2d/(d-1)}$. In this case, we have again $|(y_i^d-y_j^d)/\epsilon|_v=|d|_v|b|^{(d-1)/d}_v$ and
$$
|y_i-y_j|_v\leq  |d|^{-2}_v|b|^{(2-d)/d-(d-1)/d}_v=|d|_v^{-2}|b|_v^{\frac{3-2d}{d}}.
$$On the other hand, if $|b|^{1/d}_v\leq|d|^{-2}_v|b|^{(2-d)/d}_v,$ then the inequality
$|y_i-y_j|_v\leq|d|^{-1}_v|b|^{(2-d)/d}_v$ implies
$$
|y_i-y_j|_v\leq |d|^{-3}_v|b|^{2(2-d)/d}_v|b|_v^{-1/d}\leq |d|^{-3}_v|b|^{\frac{3-2d}{d}}_v.
$$Obtaining similar estimates for $|x_i-x_j|_v$, we have this time
$$
|x_i-x_j|_v+|y_i-y_j|_v+\lambda_v(b)\leq6\log |d|_v^{-1}.
$$Finally, suppose that $v$ is archimedean.  If we first consider $|b|_v$ sufficiently large, let's say, $|b|_v> C(d):= \left(\dfrac{144\cdot 2^d}{d} \right)^{d^2}.$ Then, since $C(d) > \left(\dfrac{24\cdot 2^{d-1}}{d} \right)^{d}$, we have
$$
|b|^{1/d}_v=|\gamma_{3j}|_v\leq |y_j|_v+|y_j-\gamma_{3j}|_v\leq |y_j|_v+\dfrac{6\cdot 2^{d-1}}{d}<|y_j|_v+\dfrac{|b|_v^{1/d}}{4},
$$and so
$|y_j|_v\geq  \dfrac{3|b|_v^{1/d}}{4}.$ Recalling that we can write
$$
S_d:=\dfrac{y_j^d-y_i^d}{y_j-y_i}=dy_j^{d-1}-\binom{d}{2}y_j^{d-2}\epsilon+...\pm \epsilon^{d-1}
$$with $\epsilon=y_j-y_i,$ the triangle inequality, Lemma 2.2, $|b|_v>1$ and the bounds above imply
\begin{equation}\begin{split}
|S_d|_v&\geq d|y_j^{d-1}|_v-\displaystyle\sum_{k=2}^{d} \binom{d}{k}|y_j|_v^{d-k}|\epsilon|^{k-1}_v\\
& \geq d|y_j|^{d-1}_v-\left(\displaystyle\sum_{k=2}^{d} \binom{d}{k}\right)(3|b|_v^{1/d})^{d-2}\left(\dfrac{6\cdot 2^d}{d} \right)^{d-1}|b|_v^{(2-d)(d-1)/d}\\
& \geq d\left( \dfrac{3|b|_v^{1/d}}{4}\right)^{d-1}-2^d(3|b|_v^{1/d})^{d-2}\left(\dfrac{6\cdot 2^d}{d} \right)^{d-1}|b|_v^{(2-d)(d-1)/d}\\
& \geq 3\dfrac{|b|_v^{(d-1)/d}}{4^{d-1}}-|b|_v^{(d-2)/d}\left(\dfrac{36\cdot 2^d}{d} \right)^{d-1}\\
&\geq \dfrac{|b|_v^{(d-1)/d}}{4^{d-1}}
\end{split}
\end{equation} for $|b|_v>C(d).$ Thus,
$$
|y_i-y_j|_v=\dfrac{|y_j^d-y_i^d|_v}{|S_d|_v}\leq \dfrac{6\cdot 2^{d+1}|b|_v^{(2-d)/d}}{d|S_d|_v}\leq \dfrac{6\cdot8^d|b|_v^{(2-d)/d-(d-1)/d}}{d}\leq \dfrac{6\cdot8^d|b|_v^{(3-2d)/d}}{d}.
$$Obtaining the same estimates for $|x_i-x_j|_v,$ we have
$$
\log |x_i-x_j|_v+\log |y_i-y_j|_v + \log|b|_v\leq 2\log \left(\dfrac{6\cdot8^d}{d} \right)
$$for all $i,j \in J$. 

If on the other hand $|b|_v\leq C(d)$, then $|b|_v^{-1/d}\geq C(d)^{-1/d}$ and by Lemma 2.2 we have
$$
|y_i-y_j|_v\leq \dfrac{6\cdot 2^d}{d}|b|_v^{(2-d)/d}\leq \dfrac{6\cdot 2^d}{d}C(d)^{1/d}|b|_v^{(2-d)/d-1/d}= \dfrac{6\cdot 2^d}{d}C(d)^{1/d}|b|_v^{(1-d)/d}.
$$Obtaining similar estimates for $|x_i-x_j|_v$, we conclude that $$
\log |x_i-x_j|_v+\log |y_i-y_j|_v + \log|b|_v\leq 2\log \left(\dfrac{6\cdot8^d}{d}C(d)^{1/d}\right)
$$for all $i,j \in J$ as desired since $\hat{\lambda}_{v,\phi}\geq 0$.
\end{proof}
\begin{lemma}
Let $I \subset [-M,M]$ such that $\#I \geq 2$ and $y_i=y_j=y$   for all $i,j \in I.$ Then there exists a subset $J \subset I$ with $\#J \geq \dfrac{1}{d^2+1}\#I-1$ such that for all $i,j \in J,$
$$
\log|x_i-x_j|_v+\dfrac{1}{2}\lambda_v(b) \leq d^{M+1}\hat{\lambda}_{v,\phi}^-(P)+\beta_v,
$$where by convention the inequality holds if $x_i=x_j$, where
\[
\beta_v=
\begin{cases}
\log ((6\cdot 2^{2d-1})d^{-2}) & \text{ if } v \text{ is archimedean},\\ \\
6 \log |d|^{-1}_v & \text{ otherwise}.
\end{cases}
\]
\end{lemma}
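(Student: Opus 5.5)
I will follow the pattern of the proof of Lemma 2.3, adapted to the situation where all the $y$-coordinates agree, so that only the $x$-coordinates can separate the points. The key observation is structural. Since $y_i=y$ for all $i\in I$, we have $x_{i+1}=y_i=y$ for all $i \in I$. Also $\phi^{-1}(x,y)=(y-x^d-b,\,x)$, so $y_{i-1}=x_i$. Hence the quantity $|x_i-x_j|_v$ equals $|y_{i-1}-y_{j-1}|_v$. In particular the points $P_{i-1}$ all share the same $x$-coordinate $x_{i-1}=y_{i-2}$... The approach is therefore to control $|x_i-x_j|_v$ either by the size of $x_i$, via $\hat\lambda^-$, or by proximity to roots of $-b$, via Lemma 2.2.

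\textbf{Case 1: a large subset lies in $\mathcal{B}_v^-(\phi)$.} Suppose a subset $J_0\subset I$ with $\#J_0\ge \frac{1}{d^2+1}\#I$ has $P_j\in\mathcal{B}_v^-(\phi)$ for all $j\in J_0$. Then Lemma 2.1 gives
$$\log|x_j|_v\le \hat\lambda^-_{v,\phi}(P_j)+(\log 3)_v .$$
Since $\hat\lambda^-(P_j)=d^{-j}\hat\lambda^-(P)\le d^{M}\hat\lambda^-_{v,\phi}(P)$ for $j\ge -M$, this is bounded by $d^{M}\hat\lambda^-_{v,\phi}(P)+(\log 3)_v$. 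The defining inequality $|x_j|_v^d>(3)_v|b|_v$ also gives
$$\tfrac12\lambda_v(b)\le \tfrac{d}{2}\log|x_j|_v\le d^{M+1}\hat\lambda^-_{v,\phi}(P)+O(1).$$
Combining this with $\log|x_i-x_j|_v\le \log\max(|x_i|_v,|x_j|_v)+(\log 2)_v$ gives the bound with $J=J_0$. The combined coefficient is at most $d^{M+1}$, since I only need $\frac12\lambda_v(b)\le \frac{d}{2}(\cdots)$ and $\frac{d}{2}+1\le d$ for $d\ge2$. The extra power of $d$ in the statement is consistent with this.

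\textbf{Case 2: a large subset lies in $\mathcal{B}_v^+(\phi)$.} Here $|y|_v^d>(3)_v\max\{|x_j|_v,|b|_v,1\}$, so both $|x_j|_v$ and $|b|_v$ are bounded in terms of $|y|_v$. Because all $y_j$ coincide, points in $\mathcal{B}^+_v$ would escape forward, so $y_{j+1}=y$ would have to grow. Concretely, $\phi(P_j)\in\mathcal{B}^+_v$ has second coordinate of size about $|y|^d_v>|y|_v$, so $P_{j+1}$ cannot have second coordinate $y$ unless $j+1\notin I$. A cleaner route is to pass to backward orbits: $x_j$ is the second coordinate of $P_{j-1}$, and if $P_{j-1}\in\mathcal{B}^-_v$ we are in Case 1. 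I expect to handle this case by showing that among any two consecutive elements of $I$ at most one such $j$ occurs, or else by reducing to Case 1. This is the step I am least sure of.

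\textbf{Case 3: most points lie outside $\mathcal{B}^+_v\cup\mathcal{B}^-_v$.} Then there is a $J_0$ of size at least $\frac{d^2}{d^2+1}\#I$ on which Lemma 2.2 applies to interior indices, with $P_{j\pm1}$ also outside. So $x_j$ and $x_{j-1}$ lie within $(6\cdot2^{d-1})_v|d|_v^{-1}|b|_v^{(2-d)/d}$ of $d$-th roots of $-b$. Pigeonholing over the $d^2$ possible pairs of roots yields $J$ with $\#J\ge \frac{1}{d^2}(\#J_0-2)\ge\frac{1}{d^2+1}\#I-1$. On $J$ the root is fixed, so
$$|x_i-x_j|_v\le (6\cdot2^{d})_v|d|_v^{-1}|b|_v^{(2-d)/d}.$$
Since $(2-d)/d\le 0$, we have $\frac12\lambda_v(b)+\log|b|_v^{(2-d)/d}\le 0$ for $d\ge 4$. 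For $d=2,3$ I would instead use the factorization $x_i^d-x_j^d=(y_{i-1}-y_{j-1})$-type relations, as in the $S_d$ argument of Lemma 2.3, to gain an extra factor $|b|_v^{-(d-1)/d}$. That gives $\log|x_i-x_j|_v+\frac12\lambda_v(b)\le\beta_v$. In the non-archimedean case with $|d|_v<1$, the losses are at most $6\log|d|_v^{-1}$ exactly as in Lemma 2.3. When $|b|_v\le 1$ the claim is immediate from $\|P_j\|_v\le(3)_v$.

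The main obstacle is Case 2, together with the bookkeeping showing that the three cases exhaust $I$ with the stated fraction $\frac{1}{d^2+1}$.
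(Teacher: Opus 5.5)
Your three-case split is the paper's: points in $\mathcal{B}^-_v(\phi)$, points in $\mathcal{B}^+_v(\phi)$, and points outside both, handled by Lemma 2.2, pigeonholing and a derivative gain. Case 1 is fine; your accounting $1+\tfrac d2\le d$ for the $\tfrac12\lambda_v(b)$ term is more careful than the paper's display. The genuine gap is Case 2, and it cannot be skipped. The right-hand side contains only $\hat\lambda^-_{v,\phi}(P)$, while the size of a point in $\mathcal{B}^+_v(\phi)$ is governed by $\hat\lambda^+$. So such indices cannot be estimated directly, and you must show there are almost none of them. Your sketch does not do this, for three reasons:
\begin{itemize}
\item Comparing $P_j$ only with $P_{j+1}$ proves nothing, since $I$ need not consist of consecutive integers.
\item ``At most one among two consecutive elements of $I$'' would still allow half of $I$ in $\mathcal{B}^+_v(\phi)$, which destroys the fraction $\frac{1}{d^2+1}$.
\item The backward route via $P_{j-1}$ fails, because $P_j\in\mathcal{B}^+_v(\phi)$ gives no reason for $P_{j-1}$ to lie in $\mathcal{B}^-_v(\phi)$.
\end{itemize}
What you need is that at most one index of $I$, namely $\max I$, has $P_j\in\mathcal{B}^+_v(\phi)$. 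For non-archimedean $v$ this follows from the forward invariance in Lemma 2.1: for every $k>j$,
$$|y_k|_v=|y_j|_v^{d^{k-j}}>|y_j|_v=|y|_v,$$
so no later $k$ lies in $I$. This costs exactly the $-1$ in $\#J_0\ge\frac{d^2}{d^2+1}\#I-1$. For archimedean $v$, membership in $\mathcal{B}^+_v(\phi)$ does not force $|y|_v^d/3>|y|_v$, so the paper compares canonical heights instead. If $i_1<i_2$ in $I$ both give points of $\mathcal{B}^+_v(\phi)$, Lemma 2.1 yields
$$\log|y|_v\le d^{i_1-i_2}\hat\lambda^+_{v,\phi}(P_{i_2})+\log 3\le\tfrac1d\bigl(\log|y|_v+\log\tfrac53\bigr)+\log 3,$$
so $|y|_v\le 15$ and $|b|_v\le 15^d/3$. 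In that bounded regime the points outside $\mathcal{B}^-_v(\phi)$ have $|x_j|_v$ bounded, and the inequality holds with a constant depending only on $d$.

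Case 3 is essentially right, and for $d\ge 4$ it is simpler than the paper's argument: $\tfrac12+\tfrac{2-d}{d}=\tfrac{4-d}{2d}\le 0$, so Lemma 2.2 alone suffices. For $d=2,3$, however, the relation you cite is wrong, since $y_{i-1}-y_{j-1}=x_i-x_j$. The correct relation is $x_i^d-x_j^d=x_{j-1}-x_{i-1}$, which follows from $y_k=x_{k-1}+x_k^d+b$ and $y_i=y_j$. Combined with your pigeonholing over the pair of roots attached to $(x_j,x_{j-1})$, it makes $|x_i^d-x_j^d|_v$ small. The lower bound for $S_d$ from Lemma 2.3 then gives $|x_i-x_j|_v\ll|d|_v^{-2}|b|_v^{(3-2d)/d}$. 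The paper gets the same gain differently: it fixes the root $\gamma$ near $x_{j-1}$ and approximates $x_j$ by a $d$-th root of the fixed number $y-b-\gamma$. Two smaller points. Applying Lemma 2.2 at $P_{j-1}$ and $P_j$ requires the neighbouring points to lie outside $\mathcal{B}^\pm_v(\phi)$; this follows from the invariance in Lemma 2.1 after discarding a few extreme indices of $J_0$. Your remark that the $P_{i-1}$ share an $x$-coordinate is false: it is the $P_{i+1}$ that share $x_{i+1}=y$.
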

\begin{proof}
    First suppose that There is a subset $J \subset I$ such that $\#J \geq \dfrac{1}{d^2+1}\#I$ and $P_j \in \mathcal{B}_v^-(\phi)$ for all $j \in J.$ Then for all $i,j \in J$, we have
\begin{equation}
\begin{split}
   \log |x_i-x_j|_v + \dfrac{1}{d}\lambda_v(b) &\leq \log \max \{|x_i|_v,|x_j|_v \}+\log (2)_v\\
   &\leq 2 \log \max \{|x_i|_v,|x_j|_v \}+\left( \log 2- \dfrac{1}{d}\log 3 \right)_v\\
   &\leq  2 \log \max \{d^i\hat{\lambda}^-_{v,\phi}(P),d^j\hat{\lambda}^-_{v,\phi}(P) \}+ \left( \log 2- \dfrac{1}{d}\log 3 +2 \log 3\right)_v\\
   & \leq d^{M+1} \hat{\lambda}^-_{v,\phi}(P)+\left( \log \left( 2 \cdot 3^{2-1/d}\right)\right)_v.
\end{split}  
\end{equation}  In this case we are done, so we assume now that such $J$ does not exist.

Now we will prove that $P_i \notin \mathcal{B}_v^+(\phi)$ for all $i \in I$ except for $i \in \max(I)$ or if $v$ is archimedean and $|b|_v\leq 15^{d-1}\cdot 5.$ In fact, if $v$ is archimedean and $(x_{i_1},y),(x_{i_2},y)\in \mathcal{B}_v^+(\phi)$ and $i_2>i_1$ in $I$, then
$$
d^{i_2}\hat{\lambda}^+_{v,\phi}(P)=\hat{\lambda}^+_{v,\phi}(P_{i_2})=\log|y|_v=\hat{\lambda}^+_{v,\phi}(P_{i_1})=d^{i_1}\hat{\lambda}^+_{v,\phi}(P),
$$which implies $i_2=i_1$ and the claim in this case. On the other hand, if $v$ is archimedean and $i_2\geq i_1+1$ we have
\begin{equation}
\begin{split}
\log |y|_v &\leq \hat{\lambda}^+_{v,\phi}(P_{i_1}) + \log 3 \\
&= d^{i_1-i_2}\hat{\lambda}^+_{v,\phi}(P) + \log 3\\
& \leq d^{i_1-i_2}(\log |y|_v + \log 5/3) + \log 3\\
& \leq \dfrac{1}{2}\log|y|_v+\dfrac{1}{2}\log 15.
    \end{split}
\end{equation}Thus,
$$
\dfrac{1}{d}\log^+ |b|_v +\dfrac{1}{d}\log 3\leq \log|y|_v \leq \log 15
$$ and so $|b|_v\leq 15^d/3$ as claimed. In this case, we can choose a set $J \subset I$ with $\#J \geq \dfrac{d^2}{d^2+1}\#I$ such that $P_j \notin \mathcal{B}^-_v(\phi)$ for all $j \in J.$ For these $j$, we have
$$
d\log|x_j|_v\leq \log \max \{1,|b|_v,|y|_v \}+\log 3 \leq \log 15^d,
$$ and so
$$
\log |x_i-x_j|_v +\log^+|b|_v \leq \max \{|x_i|_v,|x_j|_v \}+ \log^+|b|_v +\log 2 \leq \log( 10 \cdot 15^d)
$$ for all $i,j \in J.$
This proves the claim in this case. 

We can now suppose $P_i \notin \mathcal{B}^+_v(\phi)$ except for $i \in \max(I).$

In this case, there exists a subset $J_0 \subset I$ with $\#J_0 \geq \dfrac{d^2}{d^2+1}\#I -1$ such that
$$
P_j \notin \mathcal{B}_v^+(\phi)\cup \mathcal{B}_v^-(\phi)
$$ for all $j \in J_0.$ By Lemma 2.2, we have that for each such $j,$
$$
|x_{j-1}-\gamma_j|_v\leq (6\cdot2^{d-1})_v|d|_v^{-1}|b|^{(2-d)/d}_v
$$ for roots $\gamma_j^d=-b.$ By the pigeonhole principle, we can choose $J_1 \subset J_0$ with
$$
\#J_1 \geq \dfrac{1}{d}(\#J_0-2)\geq \dfrac{d}{d^2+1}\#I -\dfrac{1}{d}-1=\dfrac{d}{d^2+1}\#I -\dfrac{d+1}{d}
$$ such that $|x_{j-1}-\gamma|_v\leq (6\cdot2^{d-1})_v|d|_v^{-1}|b|^{(2-d)/d}_v$ for all $j \in J_1$ for one particular $\gamma^d=-b$ that does not depend on $j.$ Then, for $j \in J_1,$ we have
$$
|x_j^d-(y-b-\gamma)|_v=|x_{j-1}-\gamma|_v\leq (6\cdot2^{d-1})_v|d|_v^{-1}|b|^{(2-d)/d}_v.
$$ Suppose first that $v$ is non-archimedean. In this case, if $\delta^d=y-b-\gamma,$ then$|\delta|_v=|b|_v^{1/d}$. If we suppose that $\delta$ is the $d$-th root of $y-b-\gamma$ that minimizes $|\delta-x_j|_v$ and denote by $\delta_1,\delta_2,...,\delta_{d-1}$ the other roots of $y-b-\gamma$, then
$$
|\delta-\delta_i|_v\leq |(\delta-x_j)+(x_j-\delta_i)|_v\leq (2)_v |x_j-\delta_i|_v \text{ for }i=1,2,...,d-1,
$$ and so
\begin{equation}
\begin{split}
|x_j-\delta|_v\leq \dfrac{(6\cdot2^{d-1})_v|d|_v^{-1}|b|^{(2-d)/d}_v}{\prod_i^{d-1} |x_j-\delta_i|_v}&\leq \dfrac{(6\cdot2^{2d-2})_v|d|_v^{-1}|b|^{(2-d)/d}_v}{\prod_i^{d-1} |\delta-\delta_i|_v}\\&=\dfrac{|d|_v^{-1}|b|^{(2-d)/d}_v}{|d|_v|\delta|_v^{d-1}}\\&=|d|_v^{-2}|b|^{(3-2d)/d}_v.
\end{split} 
\end{equation} We can now choose a subset $J \subset J_1$ with $\#J\geq \#J_1/d\geq \dfrac{1}{d^2+1}\#I -\dfrac{d+1}{d^2}\geq \dfrac{1}{d^2+1}\#I -1$ such that 
$$
|x_i-x_j|_v\leq |d|_v^{-2}|b|^{(3-2d)/d}_v
$$ for all $i,j \in J.$ For all such $i,j$, we have
$$
\log|x_i-x_j|_v+\dfrac{1}{2}\lambda_v(b)\leq
\log|x_i-x_j|_v+\dfrac{2d-3}{d}\lambda_v(b) \leq 2\log |d|_v^{-1}\leq d^{M+1}\hat{\lambda}_{v,\phi}^-(P)+2\log |d|_v^{-1}.
$$Finally, suppose that $v$ is archimedean, $|b|_v \geq 15^d/3$ and $\delta$ is as above. Since $|\gamma|_v=|b|_v^{1/d}$ and $|y|_v\leq 3 |b|_v^{1/d}$ by Lemma 2.2, we have 
$$
|y-b-\gamma|_v\geq |b|_v-4|b|_v^{1/d}\geq |b|_v/2
$$ as $|b|_v\geq 75.$ Thus, $|\delta|_v\geq \dfrac{|b|_v^{1/d}}{2^{1/d}}$. Together with the computations from equation (6), this  implies
$$
|x_j-\delta|_v\leq (6\cdot 2^{2d-1})d^{-2}|b|^{(3-2d)/d}_v,
$$ and again a set $J \subset I$ with $\#J \geq \dfrac{1}{d^2+1}\#I -1$ such that for all $i,j \in J$ we have
$$
\log|x_i-x_j|_v+\dfrac{1}{2}\lambda_v(b) \leq d^{M+1}\hat{\lambda}_{v,\phi}^-(P)+\log ((6\cdot 2^{2d-1})d^{-2}).
$$
\end{proof}Analogously, a modification of the previous lemma yields
\begin{lemma}
Let $I \subset [-M,M]$ such that $\#I \geq 2$ and $x_i=x_j$   for all $i,j \in I.$ Then there exists a subset $J \subset I$ with $\#J \geq \dfrac{1}{d^2+1}\#I-1$ such that for all $i,j \in J,$
$$
\log|y_i-y_j|_v+\dfrac{1}{2}\lambda_v(b) \leq d^{M+1}\hat{\lambda}_{v,\phi}^+(P)+\beta_v,
$$where by convention the inequality holds if  $y_i=y_j$.
\end{lemma}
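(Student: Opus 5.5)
The plan is to obtain this lemma from the previous one (the case $y_i=y_j$) by the symmetry that exchanges the two coordinates and reverses time. Let $\sigma(x,y)=(y,x)$. Then $\phi^{-1}(x,y)=(y-x^d-b,x)$, and a direct computation gives
$$
\sigma\circ\phi^{-1}\circ\sigma(x,y)=\sigma\phi^{-1}(y,x)=\sigma(x-y^d-b,y)=(y,x-y^d-b).
$$
This is the Hénon map $\psi(x,y)=(y,x-y^d-b)$, which is not literally of the form $(y,x+y^d+b')$. So rather than a black-box reduction, I would redo the proof of the previous lemma with the roles of $x,y$ and of $\phi,\phi^{-1}$ exchanged. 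The transposed statements are already available: Lemma 2.1 treats $\mathcal{B}_v^-(\phi)$ and $\hat\lambda^-$ in parallel with $\mathcal{B}_v^+(\phi)$ and $\hat\lambda^+$, and Lemma 2.2 bounds both $|x-\gamma_2|_v$ and $|y-\gamma_1|_v$. Alternatively, conjugating by $(x,y)\mapsto(-x,-y)$ when $d$ is even (or by a scaling otherwise) would put $\psi$ back in the normalized family. The edits in either route are cosmetic, so I would simply follow the transposed argument.

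The transposed argument runs in four steps, for a set $I$ on which $x_i=x$ is constant.
\begin{enumerate}
\item Suppose a proportion at least $1/(d^2+1)$ of the $P_j$, $j\in I$, lie in $\mathcal{B}_v^+(\phi)$. Then Lemma 2.1 gives $\log|y_j|_v\le \hat\lambda^+_{v,\phi}(P_j)+(\log 3)_v= d^j\hat\lambda^+_{v,\phi}(P)+(\log3)_v$. Membership in $\mathcal{B}^+_v$ also bounds $\frac1d\lambda_v(b)$ by $\log|y_j|_v$. Hence $\log|y_i-y_j|_v+\frac12\lambda_v(b)\le 2\log\max|y|+O(1)\le d^{M+1}\hat\lambda^+_{v,\phi}(P)+\beta_v$, exactly as in the first display of the previous proof.
\item Otherwise, show that at most one $P_i$ with fixed $x$ lies in $\mathcal{B}_v^-(\phi)$, now with the exceptional index being $\min I$, since $\hat\lambda^-$ scales as $d^{-i}$. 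Two such points would satisfy $d^{-i_1}\hat\lambda^-(P)=\log|x|_v=d^{-i_2}\hat\lambda^-(P)$ exactly at non-archimedean $v$. At archimedean $v$ they satisfy this up to the bounded errors of Lemma 2.1, which forces $|b|_v\le 15^d/3$. In that small-$b$ case all $|y_j|_v$ are bounded by an absolute constant and the inequality is trivial.
\item In the remaining case, Lemma 2.2 gives most $j$ with $P_j\notin\mathcal{B}^+_v\cup\mathcal{B}^-_v$ and $|y_{j+1}-\gamma_j|_v\le (6\cdot2^{d-1})_v|d|_v^{-1}|b|_v^{(2-d)/d}$. By pigeonhole we may fix $\gamma$. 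Since $y_{j+1}=x+y_j^d+b$, this reads $|y_j^d-(\gamma-x-b)|_v\le(\cdots)|b|_v^{(2-d)/d}$.
\item Take $\delta$ to be the $d$-th root of $\gamma-x-b$ nearest to $y_j$, and repeat the root-separation estimate. This gives $|y_j-\delta|_v\le |d|_v^{-2}|b|_v^{(3-2d)/d}$ in the non-archimedean case. In the archimedean case with $|b|_v\ge15^d/3$, the same estimate holds with the constant $6\cdot2^{2d-1}d^{-2}$, using $|\gamma-x-b|_v\ge|b|_v/2$ because $|x|_v\le3|b|_v^{1/d}$. A second pigeonhole on $\delta$ then yields $J$ with $\#J\ge\frac1{d^2+1}\#I-1$. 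Since $(2d-3)/d\ge 1/2$, the conclusion follows.
\end{enumerate}

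The main obstacle I expect is step 2, which must control $\hat\lambda^-$ and $\hat\lambda^+$ correctly under time reversal. The growth factor is $d^{-i}$, with $i\in[-M,M]$, so the bound obtains $d^{M+1}$ from $i\ge -M$. The same care applies to the counting in the pigeonhole steps, where the exceptional index moves from $\max I$ to $\min I$. Everything else is a mechanical transcription of the previous proof.
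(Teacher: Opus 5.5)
Your proposal is correct and follows the paper's route. The paper proves this lemma only by calling it an analogous modification of the proof of the preceding ($y_i=y_j$) lemma, and your transposition is exactly that modification: you use $y_{j+1}=x+y_j^d+b$, show that at most one $P_i$ with fixed $x$ lies in $\mathcal{B}_v^-(\phi)$ (the exceptional index is $\min I$) unless $|b|_v$ is small at an archimedean place, and then run the two pigeonhole steps on $\gamma$ and $\delta$ together with $(2d-3)/d\ge \frac12$.

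One small slip in step 1: membership in $\mathcal{B}_v^+(\phi)$ only gives $\frac12\lambda_v(b)\le\frac d2\log|y_j|_v$, not $\frac12\lambda_v(b)\le\log|y_j|_v$. Since $1+\frac d2\le d$, you still get $d^{M+1}\hat{\lambda}^+_{v,\phi}(P)$ plus a constant depending on $d$. As in the paper's own argument, the additive constants produced in step 1 and in the small-$|b|_v$ case are not literally the stated $\beta_v$. This is harmless for the main theorem.
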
 Moreover, for good reduction primes, we have
\begin{lemma}
    Suppose that $|b|_v\leq 1$ and $v$ is non-archimedean. Then for any $i,j \in [-M,M],$ we have
    $$
    \log|x_i-x_j|_v\leq d^{M+1}\hat{\lambda}_{v,\phi}(P)
    $$and
    $$
    \log |y_i-y_j|_v\leq d^{M+1}\hat{\lambda}_{v,\phi}(P).
    $$
\end{lemma}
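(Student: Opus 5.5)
The plan is to use the ultrametric inequality together with the fact that, when $|b|_v\le 1$ at a non-archimedean place, the map $\phi$ and its inverse $\phi^{-1}(x,y)=(y-x^d-b,\,x)$ have $v$-integral coefficients. Integral coefficients let us control $\log|x_i|_v$ and $\log|y_i|_v$ by local canonical heights with no error term. Since $\log|x_i-x_j|_v\le \log\max\{|x_i|_v,|x_j|_v\}$, and likewise for $y$, it suffices to show
$$
\log^+\lVert P_i\rVert_v\le d^{M+1}\hat{\lambda}_{v,\phi}(P)\quad\text{for all } i\in[-M,M].
$$
If $\max\{|x_i|_v,|x_j|_v\}\le 1$, the left-hand sides are $\le 0\le d^{M+1}\hat\lambda_{v,\phi}(P)$, because local canonical heights are nonnegative. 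So only the case of a point of absolute value exceeding $1$ matters.

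\textbf{Key step.} With $|b|_v\le 1$, I will prove that every $Q=(x,y)$ with $\lVert Q\rVert_v>1$ satisfies
$$
\log\lVert Q\rVert_v\le \max\{\hat\lambda^+_{v,\phi}(Q),\,\hat\lambda^-_{v,\phi}(Q)\}\le \hat\lambda_{v,\phi}(Q).
$$
Consider three cases.
\begin{itemize}
\item If $|y|_v>|x|_v$ and $|y|_v>1$, then $|y|_v^d>|y|_v>\max\{|x|_v,|b|_v,1\}$, so $Q\in\mathcal{B}^+_v(\phi)$. Lemma 2.1 gives $\hat\lambda^+_{v,\phi}(Q)=\log|y|_v=\log\lVert Q\rVert_v$.
\item If $|x|_v>|y|_v$ and $|x|_v>1$, then symmetrically $Q\in\mathcal{B}^-_v(\phi)$, and $\hat\lambda^-_{v,\phi}(Q)=\log|x|_v$.
\item If $|x|_v=|y|_v=r>1$, then $|y|_v^d=r^d>r=\max\{|x|_v,|b|_v,1\}$, so again $Q\in\mathcal{B}^+_v(\phi)$ with $\hat\lambda^+_{v,\phi}(Q)=\log r$.
\end{itemize}
In every case the key inequality holds. (In fact $\mathcal{B}^+_v\cup\mathcal{B}^-_v$ covers everything outside the unit polydisc, which is the non-archimedean content of Lemma 2.2 with $|b|_v\le1$.)

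\textbf{Transport to $P$.} Apply the key step to $Q=P_i$ with $|i|\le M$. If $\hat\lambda^+$ realizes the bound, then
$$
\hat\lambda^+_{v,\phi}(P_i)=d^{i}\hat\lambda^+_{v,\phi}(P)\le d^{M}\hat\lambda^+_{v,\phi}(P).
$$
If $\hat\lambda^-$ realizes it, then
$$
\hat\lambda^-_{v,\phi}(P_i)=d^{-i}\hat\lambda^-_{v,\phi}(P)\le d^{M}\hat\lambda^-_{v,\phi}(P).
$$
Both use the functional equations $\hat\lambda^+(\phi(Q))=d\hat\lambda^+(Q)$ and $\hat\lambda^-(\phi^{-1}(Q))=d\hat\lambda^-(Q)$, hence $\hat\lambda^-(\phi(Q))=d^{-1}\hat\lambda^-(Q)$. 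Since both local heights are nonnegative, either bound is $\le d^{M}\hat\lambda_{v,\phi}(P)\le d^{M+1}\hat\lambda_{v,\phi}(P)$. Combining this with the ultrametric reduction gives both inequalities of the lemma.

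The main obstacle is the key step, specifically the boundary case $|x|_v=|y|_v>1$, where neither coordinate strictly dominates. It is resolved by noting that the defining inequality of $\mathcal{B}^+_v$ involves $|y|_v^d$ rather than $|y|_v$, so membership holds as soon as $|y|_v>1$ and $|y|_v\ge|x|_v$. One should also check that the functional equations are applied in the correct direction for negative indices $i$. The extra factor of $d$ in $d^{M+1}$ is slack and is not needed.
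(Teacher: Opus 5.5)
Your proof is correct and follows essentially the same route as the paper. Both arguments split according to whether $P_i$ lies in $\mathcal{B}^+_v(\phi)$ or $\mathcal{B}^-_v(\phi)$, use the error-free formula of Lemma 2.1 at a non-archimedean place with $|b|_v\le 1$, apply the functional equations to pass from $P_i$ back to $P$, and finish with the ultrametric inequality.

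The main difference is organizational. You split by which coordinate dominates and bound $\log^+\lVert P_i\rVert_v$ directly, which is a bit cleaner and gives the sharper factor $d^M$. The paper instead bounds $|x_i|_v$ by $|y_i|_v^d$ in the $\mathcal{B}^+_v$ case, and that is where its extra factor of $d$ comes from.
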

\begin{proof}
If $P_i=(x_i,y_i) \in \mathcal{B}^-_v(\phi),$ then
$$
\log|x_i|_v=\hat{\lambda}_{v,\phi}^-(P_i)\leq d^{M+1}\hat{\lambda}_{v,\phi}^-(P).
$$If $P_i \notin \mathcal{B}^-_v(\phi)$, then $|x_i|_v^d \leq \max \{1,|y_i|_v \}$. If $|y_i|_v\leq 1$ then
$$
\log|x_i|_v\leq 0 \leq d^{M+1}\hat{\lambda}_{v,\phi}^-(P).
$$If, on the other hand, $|y_i|_v>1$, then $|y_i|_v^d\geq \{1,|x_i|_v\}$ and so $P \in \mathcal{B}_v^{+}(\phi.)$ In this case,
$$
\log|x_i|_v\leq d\log |y_i|_v=d \hat{\lambda}_{v,\phi}^+(P_i)\leq d^{M+1}\hat{\lambda}_{v,\phi}^+(P).
$$Therefore,
$$
|x_i-x_j|_v\leq \max \{|x_i|_v,|x_j|_v \}\leq d^{M+1}(\hat{\lambda}_{v,\phi}^+(P)+\hat{\lambda}_{v,\phi}^-(P)).
$$The other inequality is deduced  similarly.
\end{proof}
\section{Proofs of theorem 1.1 and corollary 1.2}

We follow the ideas in \cite[Section 4]{Ingram1}. Let $B_{0,0}=d$ and define $B_{0,n+1}=(d^2+1)B_{0,n}+ d^2+1$ and $B_{m+1,n}=(d^4+2)B_{m,n}+d^4+2.$ Let $S$ be the set of places of $K$ dividing $b$ and $\#S=s$  so that $|b|_v\leq 1$ for the places $v$ that are not in $S$, i.e., there are at most $s$ places of bad reduction. We consider a positive integer $M$ such that $2M \geq B_{s,s}$. By Lemma 2.3 applied to places of bad reduction and Lemma 2.6 applied to places of good reduction, we can choose a subset $I \subset [-M,M]$ with $\#I \geq B_{0,s}$ such that for all $i,j \in I$ and all $v \in M_K,$ we have
$$
    |x_i-x_j|_v+|y_i-y_j|_v+ \lambda_v(b)\leq  d^{M+2}\hat{\lambda}_{v,\phi}(P)+\alpha_v.
    $$
    If there exist two values $i,j \in I$ such that $x_i\neq x_j, y_i\neq y_j,$ then summing the inequality above over all places of $K$ with the appropriate weights results in
    $$
    h(b)\leq d^{M+2}\hat{h}_\phi(P) +C
    $$where $C$ depends only on $d.$ 
    
    Otherwise, we either have $x_i=x_j$ for all $i, j \in I$ or $y_i=y_j$ for all $i,j \in I.$ If we suppose that the former is true, then $y_i\neq y_j$, and Lemma 2.5 together with Lemma 2.6 guarantee the existence of a subset $J \subset I$ with $\#J \geq 2$ such that
    $$
\log|y_i-y_j|_v+\dfrac{1}{2}\lambda_v(b) \leq d^{M+1}\hat{\lambda}_{v,\phi}^+(P)+\beta_v,
$$ for all $i,j \in J, v \in M_K.$ Then summing the inequality above over all places of $K$ with the appropriate weights results in
    $$
    h(b)\leq d^{M+2}\hat{h}^+_\phi(P) +C\leq d^{M+2}\hat{h}_\phi(P) +C
    $$where $C$ depends only on $d.$ The case with $ y_i=y_j$ for all $i,j \in I$ is handled similarly. Now, the fact that $\hat{h}_\phi$ is discrete \cite{Kawaguchi2} yields
    $$
    h(b)\leq d^{M+2}\cdot\min \{\hat{h}_\phi(P) |P \in K^2, \hat{h}_\phi(P) \neq 0\} +C,
    $$ which proves Theorem 1.1. If the latter was true, we use Lemma 2.4 instead of Lemma 2.5 to reach the same conclusion

    Corollary 1.2 can be proven exactly the same as in the proof of \cite[Theorem 1.8]{Ingram1}, but replacing the usage of \cite[Theorem 1.4]{Ingram1} therein by Theorem 1.1 of this note.
   
\section*{Acknowledgements}
This work was supported by the Oakland University and the FCT grant.
\nocite{}
\bibliographystyle{abbrv}
\bibliography{Castle}

@article {Silverman,
    AUTHOR = {Silverman, Joseph},
     TITLE = {Geometric and arithmetic properties of the \text{H}\'{e}non map},
   JOURNAL = {Math. Z.},
  FJOURNAL = {Math. Z.},
    VOLUME = {215},
      YEAR = {1994},
    NUMBER = {2},
     PAGES = {237--250},
      ISSN = {0065-1036,1730-6264},
   MRCLASS = {11R18},
  MRNUMBER = {3119783},
MRREVIEWER = {Jos\'e\ Othon Dantas Lopes},
       DOI = {10.4064/aa160-4-2},
       URL = {https://doi.org/10.4064/aa160-4-2},
}

@article {Silverman2,
    AUTHOR = {Silverman, Joseph},
     TITLE = {Lower bound for the canonical height on elliptic curves},
   JOURNAL = {Duke Math.
J.},
  FJOURNAL = {Duke Math.
J.},
    VOLUME = {48},
      YEAR = {1981},
    NUMBER = {},
     PAGES = {633--648},
      ISSN = {0065-1036,1730-6264},
   MRCLASS = {11R18},
  MRNUMBER = {3119783},
MRREVIEWER = {Jos\'e\ Othon Dantas Lopes},
       DOI = {10.4064/aa160-4-2},
       URL = {https://doi.org/10.4064/aa160-4-2},
}

@article {Ingram1,
    AUTHOR = {Ingram, Patrick},
     TITLE = {Canonical maps for \text{H}\'{e}non maps},
   JOURNAL = {Proceedings of the London Math. Soc.},
  FJOURNAL = {Proceedings of the London Math. Soc.},
    VOLUME = {108},
      YEAR = {2014},
    NUMBER = {3},
     PAGES = {780--808},
      ISSN = {0065-1036,1730-6264},
   MRCLASS = {11R18},
  MRNUMBER = {3119783},
MRREVIEWER = {Jos\'e\ Othon Dantas Lopes},
       DOI = {10.4064/aa160-4-2},
       URL = {https://doi.org/10.4064/aa160-4-2},
}

@article {Kawaguchi2,
    AUTHOR = {Kawaguchi, Shu},
     TITLE = {Canonical height functions for affine plane automorphisms},
   JOURNAL = {Math. Ann.},
  FJOURNAL = {Math. Ann.},
    VOLUME = {335},
      YEAR = {2006},
    NUMBER = {2},
     PAGES = {285-310},
      ISSN = {0065-1036,1730-6264},
   MRCLASS = {11R18},
  MRNUMBER = {3119783},
MRREVIEWER = {Jos\'e\ Othon Dantas Lopes},
       DOI = {10.4064/aa160-4-2},
       URL = {https://doi.org/10.4064/aa160-4-2},
}

@article {Kawaguchi,
    AUTHOR = {Kawaguchi, Shu},
     TITLE = {Local and global canonical height functions for affine space regular automorphisms},
   JOURNAL = {Algebra \& Number Theory},
  FJOURNAL = {Algebra \& Number Theory},
    VOLUME = {7},
      YEAR = {2013},
    NUMBER = {5},
     PAGES = {},
      ISSN = {0065-1036,1730-6264},
   MRCLASS = {11R18},
  MRNUMBER = {3119783},
MRREVIEWER = {Jos\'e\ Othon Dantas Lopes},
       DOI = {10.4064/aa160-4-2},
       URL = {https://doi.org/10.4064/aa160-4-2},
}

@article {Looper,
    AUTHOR = {Looper, Nicole},
     TITLE = {A lower bound on the canonical height for polynomials},
   JOURNAL = {Math. Ann.},
  FJOURNAL = {Math. Ann.},
    VOLUME = {373},
      YEAR = {2019},
    NUMBER = {},
     PAGES = {1057-1074},
      ISSN = {0065-1036,1730-6264},
   MRCLASS = {11R18},
  MRNUMBER = {3119783},
MRREVIEWER = {Jos\'e\ Othon Dantas Lopes},
       DOI = {10.4064/aa160-4-2},
       URL = {https://doi.org/10.4064/aa160-4-2},
}
\end{document}